\numberwithin{equation}{section}
\theoremstyle{plain}
\newtheorem{theorem}{Theorem}[section]
\newtheorem{lemma}[theorem]{Lemma}
\newtheorem{corollary}[theorem]{Corollary}
\newtheorem{conjecture}[theorem]{Conjecture}
\newtheorem{question}[theorem]{Question}
\theoremstyle{definition}
\theoremstyle{remark}
\newtheorem{remark}[theorem]{Remark}
\newtheorem{case[theorem]}{Case}
\title[\parbox{14cm}{\centering{Extension theorems for Hamming varieties  \hspace{1in}}} \quad]{Extension theorems for Hamming varieties over finite fields}
\author{Daewoong Cheong, Doowon Koh,  and Thang Pham}
\address{Department of Mathematics\\
Chungbuk National University \\
Cheongju, Chungbuk 28644 Korea}
\email{daewoongc@chungbuk.ac.kr}
\address{Department of Mathematics\\
Chungbuk National University \\
Cheongju, Chungbuk 28644 Korea}
\email{koh131@chungbuk.ac.kr}
\address{Department of Mathematics\\
University of California, San Diego\\
La Jolla, CA  92093 USA}
\email{v9pham@ucsd.edu}
\thanks{Key words and phrases: Finite field, Hamming variety, Extension operator\\
The first and second listed authors were supported by Basic Science Research Programs through National Research Foundation of Korea (NRF) funded by the Ministry of Education (NRF-2018R1D1A3B07045594 and NRF-2018R1D1A1B07044469, respectively). The third listed author was supported by Swiss National Science Foundation grant P400P2-183916..
}
\subjclass[2010]{42B05, 11T23 }
\begin{document} 

\begin{abstract} We study the finite field extension estimates for Hamming varieties $H_j, j\in \mathbb F_q^*,$ defined by
$H_j=\{x\in \mathbb F_q^d: \prod_{k=1}^d x_k=j\},$
where $\mathbb F_q^d$ denotes the $d$-dimensional vector space over a finite field $\mathbb F_q$ with $q$ elements. We show that although the maximal Fourier decay bound on $H_j$ away from the origin is not good, 
the Stein-Tomas $L^2\to L^r$ extension estimate for $H_j$ holds.
\end{abstract}
\maketitle
\section{Introduction} 
The extension or restriction problem is one of central open questions in Euclidean harmonic analysis.
In 2002, Mockenhaupt and Tao \cite{MT04} initially studied this problem for algebraic varieties in the finite field setting.
Let $\mathbb F_q^d$ be the $d$-dimensional vector space over a finite field with $q$ elements. Throughout this paper, we assume that $q$ is an odd prime power.
Given complex-valued functions $f, g$ on $\mathbb F_q^d$ and $1\le s <\infty,$ we define
$$ \|g\|_{\ell^{s}(\mathbb F_q^d)} := \left(\sum_{m\in \mathbb F_q^d} |g(m)|^s\right)^{1/s} \quad \mbox{and}\quad \|f\|_{L^{s}(\mathbb F_q^d)} := \left(q^{-d}\sum_{x\in \mathbb F_q^d} |f(x)|^s\right)^{1/s}.$$
In addition, it is defined that $\|g\|_{\ell^{\infty}(\mathbb F_q^d)} := \max_{m\in \mathbb F_q^d} |g(m)|$ and $\|f\|_{L^{\infty}(\mathbb F_q^d)} := \max_{x\in \mathbb F_q^d} |f(x)|.$
The notation $\|g\|_{\ell^{s}(\mathbb F_q^d)}$ indicates that  the function $g$ is defined on the space $\mathbb F_q^d$ with counting measure. On the other hand, the notation $\|f\|_{L^{s}(\mathbb F_q^d)}$ 
tells us that the function $f$ is defined on the space $\mathbb F_q^d$ with normalized counting measure.
Let $V$ be an algebraic variety in $\mathbb F_q^d.$ We endow $V$ with a normalized surface measure $d\sigma$ which means that  the mass of each point of $V$ is $1/|V|,$ where 
$|V|$ denotes the cardinality of the set $V.$ For a function $f:V \to \mathbb C$ and $1\le s<\infty,$ we define
$$ \|f\|_{L^s(V, d\sigma)}:= \left(\frac{1}{|V|} \sum_{x\in V} |f(x)|^s\right)^{1/s}.$$
We also define $\|f\|_{L^\infty(V, d\sigma)}:=\max_{x\in V} |f(x)|.$\\

 The Fourier transform of $g$, denoted by $\widehat{g},$ is defined by
$$ \widehat{g}(x)= \sum_{m\in \mathbb F_q^d} \chi(-m\cdot x) g(m),$$
where $\chi$ denotes the canonical additive character of $\mathbb F_q$, and $m\cdot x$ is the usual dot-product of $m$ and $x.$ We recall that the orthogonality of $\chi$ states that 
$$ \sum_{\alpha \in \mathbb F_q^d} \chi(n\cdot \alpha) =\left\{\begin{array}{ll}  0 \quad &\mbox {if} \quad n\ne (0,\ldots, 0) \\
 q^d \quad &\mbox{if} \quad n=(0,\ldots,0) . \end{array}\right.$$

The inverse Fourier transform of $f$, denoted by $f^{\vee}$, is defined by
$$ f^\vee(m):=q^{-d}\sum_{x\in \mathbb F_q^d} \chi(m\cdot x) f(x).$$ Furthermore,  the inverse Fourier transform of the measure $fd\sigma$ is given by 
$$ (fd\sigma)^\vee(m):= \frac{1}{|V|} \sum_{x\in V} \chi(m\cdot x) f(x).$$
We denote by $R^*(p\to r)$ the smallest constant such that the following extension estimate
\begin{equation}\label{Extension} \|(fd\sigma)^\vee\|_{\ell^r(\mathbb F_q^d)} \le R^*(p\to r) \|f\|_{L^p(V, d\sigma)}\end{equation}
holds for all functions $f$ on $V.$ Note that $R_V^*(p\to r)$ may depend on $q$, the size of the underlying finite field $\mathbb F_q.$
The extension problem for the variety $V$ is to determine all exponents $1\le p, r\le \infty$ such that $R_V^*(p\to r)$ is independent of $q.$
For $A, B>0$, we will write $A\lesssim B$ if $A\le C B$ for some constant $C>0$ independent of $q.$ We will also use $A\sim B$ if $A\lesssim B$ and $B\lesssim A.$
By a well-known duality,  the inequality \eqref{Extension} is the same as the following restriction estimate:
$$ \|\widehat{g}\|_{L^{p'}(V, d\sigma)} \le R_V^*(p\to r) \|g\|_{\ell^{r'}(\mathbb F_q^d)},$$
where $p', r'$ denote the dual exponents of $p,r$, respectively (i.e.  $1/p+1/p'=1$ and $1/r+1/r'=1$).\\

When $|V|\sim q^{d-1}$, necessary conditions for $R^*_V(p\to r)$ bound can be obtained from the size of a maximal affine subspace lying on $V.$ Indeed, Mockenhaupt and Tao \cite{MT04} showed that  if $|V|\sim q^{d-1}$ and $V$ contains an affine subspace $H$ with $|H|=q^k,$ then  necessary conditions for $R^*_V(p\to r)$ bound  are given by 
$$\label{Necessary2}
r\geq \frac{2d}{d-1}  \quad \mbox{and} \quad r\geq\frac{p(d-k)}{(p-1)(d-1-k)}.$$
In dimension two, the extension problem for algebraic curves $V$ was completely solved by Shen and the second listed author \cite{KS12} who showed that the above necessary conditions are also sufficient conditions for $R^*_V(p\to r)$ bound. For this reason, we will restrict ourselves to the case when $d\ge 3.$ 
In particular, we have the following conjecture for $R^*_V(2\to r)$ bound.
\begin{conjecture}\label{conj1} Let $V$ be an algebraic variety in $\mathbb F_q^d.$ Suppose that $|V|\sim q^{d-1}$ and $V$ contains an affine subspace $H$ with $|H|=q^k.$ Then we have
$$ R^*_V(2\to r) \lesssim 1 \quad \mbox{if} \quad \frac{2(d-k)}{d-k-1}\le r\le \infty.$$
\end{conjecture}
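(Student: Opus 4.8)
The plan is to establish the dual restriction inequality $\|\widehat g\|_{L^2(V,d\sigma)}\lesssim\|g\|_{\ell^{r'}(\mathbb F_q^d)}$, which by the duality recorded above is equivalent to $R^*_V(2\to r)\lesssim 1$, and to do so by the Stein--Tomas $TT^*$ mechanism, collapsing the conjecture to a single Fourier estimate. Put $r'=r/(r-1)$, so that $r'\le 2$ throughout the claimed range. Expanding the square and invoking the orthogonality of $\chi$ gives
\begin{equation*}
\|\widehat g\|_{L^2(V,d\sigma)}^2=\sum_{m,m'\in\mathbb F_q^d}g(m)\,\overline{g(m')}\,(d\sigma)^\vee(m'-m),
\end{equation*}
with $(d\sigma)^\vee(0)=1$. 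The diagonal $m=m'$ contributes $\|g\|_{\ell^2}^2\le\|g\|_{\ell^{r'}}^2$, while the off-diagonal part equals $\langle K*g,\,g\rangle$ for the kernel $K(n):=(d\sigma)^\vee(n)\,\mathbf 1_{n\ne 0}$. H\"older together with Young's convolution inequality (with $1/s+1/r'=1+1/r$, hence $s=r/2$) bounds the latter by $\|K\|_{\ell^{r/2}}\|g\|_{\ell^{r'}}^2$. It therefore suffices to prove the flat-decay estimate
\begin{equation*}
\big\|(d\sigma)^\vee\big\|_{\ell^{r/2}(\mathbb F_q^d\setminus\{0\})}\lesssim 1,\qquad r=\frac{2(d-k)}{d-k-1}.
\end{equation*}

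To attack this I would sort the nonzero frequencies into dyadic level sets $A_\lambda=\{n\ne 0:|(d\sigma)^\vee(n)|\sim\lambda\}$ and write $\|(d\sigma)^\vee\|_{\ell^{r/2}}^{r/2}\sim\sum_\lambda\lambda^{r/2}|A_\lambda|$. Two inputs are unconditional: the Plancherel identity $\sum_n|(d\sigma)^\vee(n)|^2=q^d/|V|\sim q$, giving $|A_\lambda|\lesssim q\lambda^{-2}$, and the trivial bound $|(d\sigma)^\vee|\le 1$. The affine subspace $H$ fixes the exponent: the Mockenhaupt--Tao extremizer $f=\mathbf 1_H$ has $(\mathbf 1_Hd\sigma)^\vee$ supported on the $(d-k)$-dimensional annihilator $H^\perp$ at height $\sim q^{k-d+1}$, and the same flat is expected to force a spike of $(d\sigma)^\vee$ on $H^\perp$ of comparable height; a direct computation shows this piece contributes $\lambda^{r/2}|A_\lambda|\sim 1$ exactly at $r=\frac{2(d-k)}{d-k-1}$, so the necessary condition is saturated by the flat. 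What remains is to show that every level obeys a cardinality bound of the form $|A_\lambda|\lesssim\lambda^{-r/2}$, which, played against $|A_\lambda|\lesssim q\lambda^{-2}$, forces the dyadic sum to concentrate at the critical scale and total $O(1)$. The natural engine is the moment method: for each integer $j$, orthogonality yields
\begin{equation*}
\sum_{n}|(d\sigma)^\vee(n)|^{2j}=\frac{q^d}{|V|^{2j}}\,E_j(V),\qquad E_j(V)=\#\{(\mathbf x,\mathbf y)\in V^{2j}:\textstyle\sum_i x_i=\sum_i y_i\},
\end{equation*}
whence Chebyshev gives $|A_\lambda|\lesssim\lambda^{-2j}q^d|V|^{-2j}E_j(V)$, and interpolating the integer moments across the critical exponent would close the argument once the energies $E_j(V)$ are controlled.

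The hard part, and the reason this remains a conjecture, is precisely that last step: for an arbitrary variety $V$ there is no handle on $(d\sigma)^\vee$ beyond Plancherel and the trivial bound, and the statement one actually needs --- that the large values of $(d\sigma)^\vee$, equivalently the excess additive energy $E_j(V)$, are accounted for entirely by the affine subspaces sitting inside $V$ --- is a structural incidence theorem unavailable at this generality. Two further difficulties compound this. First, the endpoint $r=\frac{2(d-k)}{d-k-1}$ is delicate: the dyadic sum spans $O(\log q)$ scales, and one must prove it is dominated by the single critical scale with no logarithmic loss, since $R^*_V$ must be independent of $q$. Second, the energies $E_j(V)$ are genuinely computable only for varieties with explicit character-sum structure; for the Hamming varieties $H_j$ the Fourier transform can be evaluated via Gauss and Kloosterman sums, which is exactly what renders the level sets $A_\lambda$ accessible and the scheme executable there, whereas the uniform statement for every $V$ carrying a $q^k$-flat is the crux that stays open.
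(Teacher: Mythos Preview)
The statement you are attempting is \emph{Conjecture}~1.1 in the paper, and the paper does \emph{not} prove it. The authors only establish, for the specific Hamming variety $H_j$, the Stein--Tomas exponent $r=\tfrac{2d+2}{d-1}$ (their Theorem~1.4), and they explicitly say that the conjectured endpoint $r=\tfrac{2d}{d-1}$ for $H_j$ (the case $k=0$) remains open. So there is nothing in the paper to compare your proposal against; you are correctly treating it as an open problem.

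That said, your proposed reduction has a concrete flaw, not merely a missing ingredient. Bounding the off-diagonal term by a single Young inequality gives the sufficient condition $\|(d\sigma)^\vee\|_{\ell^{r/2}(\mathbb F_q^d\setminus\{0\})}\lesssim 1$, but this condition is \emph{false} already in the model cases where the conjecture is known. Take the paraboloid (or sphere) with $k=0$, so the target is $r=\tfrac{2d}{d-1}$ and $|(d\sigma)^\vee(m)|\sim q^{-(d-1)/2}$ for every $m\ne 0$. Then
\[
\big\|(d\sigma)^\vee\big\|_{\ell^{r/2}(\mathbb F_q^d\setminus\{0\})}
\;\sim\; \bigl(q^{d}\cdot q^{-\frac{(d-1)}{2}\cdot\frac{r}{2}}\bigr)^{2/r}
\;=\; q^{\frac{2d}{r}-\frac{d-1}{2}}
\;=\; q^{(d-1)/2},
\]
which blows up. Equivalently, your desired level-set bound $|A_\lambda|\lesssim\lambda^{-r/2}$ fails at the only scale present: $|A_\lambda|\sim q^{d}$ while $\lambda^{-r/2}\sim q^{d/2}$. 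So the obstruction you diagnose (``no structural incidence theorem'') is not the reason your scheme stalls; the scheme is too lossy to succeed even with perfect square-root cancellation.

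The genuine Stein--Tomas argument does not go through a single Young inequality on $K$. It interpolates the convolution operator $g\mapsto g\ast(d\sigma)^\vee$ between an $L^2\to L^2$ bound (Plancherel plus $|V|\sim q^{d-1}$, giving operator norm $\sim q$) and an $L^1\to L^\infty$ bound (pointwise decay of $(d\sigma)^\vee$ away from $0$). Even with optimal decay $q^{-(d-1)/2}$ this only reaches $r=\tfrac{2d+2}{d-1}$, which is still short of the conjectured $\tfrac{2(d-k)}{d-k-1}$; closing that gap is what is genuinely open.
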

By the norm nesting property (see Section \ref{sec2}), one can check that if $1\le r_1\le r_2\le \infty,$ then $R^*_V(2\to r_2) \le R^*_V(2\to r_1).$ This implies that a smaller exponent gives a better result on the restriction problem.
Thus if we want to  establish the sharp $R_V^*(2\to r)$ bound, then we only needs to find the smallest exponent $r$ such that $R_V^*(2\to r)\lesssim 1.$ Namely, to confirm Conjecture \ref{conj1} it suffices to prove that
$$ R^*_V\left(2\to \frac{2(d-k)}{d-k-1}\right)\lesssim 1.$$\\

The finite field extension problem has been studied only for few algebraic varieties with relatively simple structures such as spheres, paraboloids, or cones. 
For example, Mockenhaupt and Tao \cite{MT04} addressed  results on the problem for paraboloids and cones, and their work for those varieties has been recently improved by other researchers 
(see \cite{KS, IK09, LL10,  Le13, Ko16, IKL17, RS18, KPV, Le19}).
For spheres, Iosevich and the second listed author \cite{IK10}  obtained nontrivial results which have been improved in the papers \cite{IKLST, KPV}. While several new methods have been used in studying the Euclidean extension problem, there are only few known skills to deduce the results on the finite field extension problem. Among other things, the Stein-Tomas argument can be applied in the finite field case to deduce $R^*_V(2\to r)$ bound. Indeed, Mockenhaupt and Tao \cite{MT04} introduced the finite field Stein-Tomas argument. In particular, we have the following lemma which is a special case of Lemma 6.1 in \cite{MT04}.
\begin{lemma}[The finite field Stein-Tomas argument] \label{ST}
Let $d\sigma$ be the normalized surface measure on an algebraic variety $V$ in $\mathbb F_q^d.$ Suppose that
\begin{equation}\label{C1} |V|\sim q^{d-1}\end{equation}
and 
\begin{equation}\label{C2} \max_{m\in \mathbb F_q^d \setminus \{(0,\ldots,0)\}} |(d\sigma)^\vee(m)| \lesssim q^{-\frac{\alpha}{2}}\end{equation}
for some $ \alpha >0.$ Then we have
$$ R_V^*\left( 2\to \frac{2(\alpha+2)}{\alpha}\right) \lesssim 1.$$
\end{lemma}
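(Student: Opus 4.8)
The plan is to carry out the finite field Stein--Tomas argument, i.e.\ a $TT^*$ reduction followed by interpolation. Set $r:=2(\alpha+2)/\alpha$ and regard the extension operator $Ef:=(fd\sigma)^\vee$ as a map $L^2(V,d\sigma)\to\ell^r(\mathbb F_q^d)$, so that $R_V^*(2\to r)=\|E\|_{L^2(V,d\sigma)\to\ell^r(\mathbb F_q^d)}$. Unwinding the definitions together with the orthogonality of $\chi$, one computes that the adjoint of $E$ (with respect to the $L^2(V,d\sigma)$ inner product on the domain) is the restriction map $E^*g=\widehat g|_V$, and therefore
\[
EE^*g=\bigl((\widehat g|_V)\,d\sigma\bigr)^\vee=g\ast(d\sigma)^\vee .
\]
By the elementary identity $\|E\|_{L^2\to\ell^r}^2=\|EE^*\|_{\ell^{r'}\to\ell^r}$, proving the lemma reduces to establishing the convolution estimate $\|g\ast(d\sigma)^\vee\|_{\ell^r(\mathbb F_q^d)}\lesssim\|g\|_{\ell^{r'}(\mathbb F_q^d)}$.

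Next I would split off the main term. Since $(d\sigma)^\vee(0)=1$, write $(d\sigma)^\vee=\delta_0+K$, where $K$ coincides with $(d\sigma)^\vee$ away from the origin and $K(0)=0$. The $\delta_0$ piece gives $g\ast\delta_0=g$, and $\|g\|_{\ell^r}\le\|g\|_{\ell^{r'}}$ by the norm nesting property of $\ell^s$-norms (valid since $r\ge 2\ge r'$), so this contribution is harmless. For the kernel $K$ I would prove two endpoint bounds. The $\ell^1\to\ell^\infty$ bound is immediate from hypothesis \eqref{C2}: $\|g\ast K\|_{\ell^\infty}\le\|K\|_{\ell^\infty}\,\|g\|_{\ell^1}\lesssim q^{-\alpha/2}\|g\|_{\ell^1}$. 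For the $\ell^2\to\ell^2$ bound I would pass to the Fourier side: using $\widehat{g\ast K}=\widehat g\,\widehat K$ and Plancherel, $\|g\ast K\|_{\ell^2}\le\|\widehat K\|_{\ell^\infty}\|g\|_{\ell^2}$; a short computation with Fourier inversion gives $\widehat K(x)=\widehat{(d\sigma)^\vee}(x)-1=\tfrac{q^d}{|V|}\mathbf 1_V(x)-1$, so that $\|\widehat K\|_{\ell^\infty}\lesssim q^d/|V|\lesssim q$ by \eqref{C1}.

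Finally I would interpolate (Riesz--Thorin) between the estimates $(\ell^1\to\ell^\infty,\ \text{norm}\lesssim q^{-\alpha/2})$ and $(\ell^2\to\ell^2,\ \text{norm}\lesssim q)$. For the interpolation parameter $\theta$ the resulting exponents $p_\theta,q_\theta$ automatically satisfy $1/p_\theta+1/q_\theta=1$, so with $r=q_\theta=2/\theta$ and $r'=p_\theta$ the convolution with $K$ maps $\ell^{r'}\to\ell^r$ with operator norm $\lesssim q^{-\alpha(1-\theta)/2}\,q^{\theta}$. Choosing $\theta=\alpha/(\alpha+2)$ makes the exponent of $q$ vanish and yields exactly $r=2(\alpha+2)/\alpha$, which completes the argument. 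The point that requires care is that the $\ell^2\to\ell^2$ operator norm of convolution with $K$ is only $\lesssim q$, not $\lesssim 1$; this looks lossy, but it is precisely offset by the $q^{-\alpha/2}$ gain at the $\ell^1\to\ell^\infty$ endpoint, and the two exponents balance exactly at the claimed $r$. It is also essential that $\delta_0$ was removed first: convolution with the full measure $(d\sigma)^\vee$ has $\ell^1\to\ell^\infty$ norm $\sim 1$, which upon interpolation with a $q$-sized $\ell^2\to\ell^2$ bound would recover only the trivial estimate at $r=\infty$.
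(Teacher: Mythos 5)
Your proof is correct, and it is essentially the canonical argument: the paper itself does not prove Lemma \ref{ST} (it is quoted from Mockenhaupt--Tao), but your $TT^*$ reduction, the splitting $(d\sigma)^\vee=\delta_0+K$, the two endpoint bounds ($\ell^1\to\ell^\infty$ from the decay hypothesis, $\ell^2\to\ell^2$ from Plancherel and $\widehat{(d\sigma)^\vee}=\tfrac{q^d}{|V|}1_V$), and the Riesz--Thorin interpolation with $\theta=\alpha/(\alpha+2)$ are exactly that standard proof. It is also the same scheme the paper runs in its proof of Theorem \ref{main}, where the only difference is that the crude splitting $\delta_0+K$ is replaced by the finer decomposition of $(d\sigma_j)^\vee$ over the sets $N_0,\dots,N_d$ because the decay hypothesis \eqref{C2} with $\alpha=d-1$ fails for the Hamming variety.
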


For a general version of Lemma \ref{ST}, we refer readers to \cite{CC}. To apply Lemma \ref{ST}, one needs to compute the maximal Fourier decay bound on the measure $d\sigma$ away from the origin. For example,  when $V$ is a sphere or a paraboloid, it is well-known that one can take $\alpha=(d-1)/2$ in \eqref{C2}, and thus 
$R^*_V(2\to (2d+2)/(d-1))\lesssim 1$ (see \cite{MT04, IK10}). This result is called as the Stein-Tomas result which gives the optimal $R^*_V(2\to r)$ bound in general. 
Now, we pose an interesting question. 
\begin{question}\label{Q1} Does there exist a variety $V$ such that the condition \eqref{C2} does not hold with $\alpha=(d-1)/2$ but we still have the Stein-Tomas result for $V$?\end{question}
There exist several varieties $V$ in $\mathbb F_q^d$ such that the condition \eqref{C2} does not hold with $\alpha=(d-1)/2$ and the Stein-Tomas result can not be obtained. For example, if $d$ is even and $d\sigma$ is the normalized surface measure on the variety $V:=\{x\in \mathbb F_q^d: x_1^2-x_2^2+\cdots+ x_{d-1}^2-x_d^2=0\}$, then 
$$ \max_{m\in \mathbb F_q^d \setminus \{(0,\ldots,0)\}} |(d\sigma)^\vee(m)|\sim q^{-\frac{(d-2)}{2}}$$
and  the optimal $L^2\to L^r$ extension estimate for $V$ is that $R^*_V(2\to 2d/(d-2)) \lesssim 1,$ which is much weaker than the Stein-Tomas result (see Theorem 2.1 and Lemma 4.1 in \cite{KS}). \\

Our main purpose of this paper is to provide a concrete variety which gives a positive answer to the above question.\\

For each $j\in \mathbb F_q^*$, the Hamming variety $H_j$ in $\mathbb F_q^d$ is defined by
\begin{equation}\label{defH} H_j=\left\{x=(x_1,\ldots, x_d)\in \mathbb F_q^d: \prod_{k=1}^d x_k=j\right\}.\end{equation}
Since $j\ne 0$, it is not hard to see $|H_j|=(q-1)^{d-1}\sim q^{d-1}.$ Our main result is as follows.
\begin{theorem}\label{main}
Let $d\sigma_j$ denote the normalized surface measure on the Hamming variety $H_j$ in $\mathbb F_q^d$ defined as in \eqref{defH}.
Then, for every $j\ne 0$ and $d\ge 3,$ we have
$$ R^*_{H_j} \left(2\to \frac{2d+2}{d-1}\right) \lesssim 1.$$
\end{theorem}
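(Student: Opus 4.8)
The plan is to carry out the finite field Stein--Tomas argument by hand, first evaluating $(d\sigma_j)^\vee$ exactly and then splitting it so as to isolate the part responsible for the weak Fourier decay. A direct appeal to Lemma~\ref{ST} is not enough: the vectors $m$ with a single nonzero coordinate already give $|(d\sigma_j)^\vee(m)|=(q-1)^{-1}$, so $\max_{m\neq 0}|(d\sigma_j)^\vee(m)|\sim q^{-1}$, which only produces $R^*_{H_j}(2\to 4)\lesssim 1$, matching $\tfrac{2d+2}{d-1}$ only for $d=3$. For the exact computation I would write $(d\sigma_j)^\vee(m)=\tfrac{1}{(q-1)^{d-1}}\sum_{x\in H_j}\chi(m\cdot x)$ and set $\ell=|\{k:m_k\neq 0\}|$. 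If $1\le\ell\le d-1$, then freezing the coordinates $x_k$ with $m_k\neq 0$ (the remaining ones then run over a $(d-\ell)$-fold Hamming variety of cardinality $(q-1)^{d-\ell-1}$) and using $\sum_{t\in\mathbb F_q^*}\chi(m_kt)=-1$ gives $(d\sigma_j)^\vee(m)=(-1)^\ell(q-1)^{-\ell}$, together with $(d\sigma_j)^\vee(0)=1$. If $\ell=d$, the substitution $y_k=m_kx_k$ turns the sum into the hyper-Kloosterman sum $\mathrm{Kl}_d(a)=\sum_{y_1\cdots y_d=a}\chi(y_1+\cdots+y_d)$ with $a=j\prod_k m_k$, so Deligne's bound $|\mathrm{Kl}_d(a)|\le d\,q^{(d-1)/2}$ gives $|(d\sigma_j)^\vee(m)|\lesssim q^{-(d-1)/2}$ on the full-support set.

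Then I would decompose $(d\sigma_j)^\vee=\delta_0+B+G$, with $B$ the restriction to $\{m:1\le|\mathrm{supp}(m)|\le d-1\}$ and $G$ the restriction to the full-support vectors. By duality it suffices to prove $\|\widehat g\|_{L^2(H_j,d\sigma_j)}^2\lesssim\|g\|_{\ell^{r'}(\mathbb F_q^d)}^2$ for $r=\tfrac{2d+2}{d-1}$, and expanding the square,
$$\|\widehat g\|_{L^2(d\sigma_j)}^2=\sum_{m,m'}g(m)\overline{g(m')}\,(d\sigma_j)^\vee(m'-m)=\|g\|_{\ell^2}^2+\langle g*B,g\rangle+\langle g*G,g\rangle.$$
The $\delta_0$-term equals $\|g\|_{\ell^2}^2\le\|g\|_{\ell^{r'}}^2$ by nesting of $\ell^s$-norms ($r'\le 2\le r$). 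For the $B$-term the key point is that the number of $m$ with $|\mathrm{supp}(m)|=\ell$ is $\binom{d}{\ell}(q-1)^\ell$, which cancels the value $(q-1)^{-\ell}$ exactly; hence $\|B\|_{\ell^1}=\sum_{\ell=1}^{d-1}\binom{d}{\ell}=2^d-2\lesssim 1$, and Young's inequality together with nesting gives $|\langle g*B,g\rangle|\le\|B\|_{\ell^1}\|g\|_{\ell^r}\|g\|_{\ell^{r'}}\lesssim\|g\|_{\ell^{r'}}^2$.

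It remains to bound the main term $\langle g*G,g\rangle$, which I would do by interpolating the convolution operator $T_Gg=g*G$. We have $\|T_G\|_{\ell^1\to\ell^\infty}=\|G\|_{\ell^\infty}\lesssim q^{-(d-1)/2}$ from the Deligne bound above, and $\|T_G\|_{\ell^2\to\ell^2}=\|\widehat G\|_{\ell^\infty}\lesssim q$, since $\widehat{(d\sigma_j)^\vee}=\tfrac{q^d}{|H_j|}\mathbf 1_{H_j}$ while $\widehat{\delta_0}\equiv 1$ and $\|\widehat B\|_{\ell^\infty}\le\|B\|_{\ell^1}\lesssim 1$. Riesz--Thorin interpolation with $\theta=2/r$ then gives
$$\|T_G\|_{\ell^{r'}\to\ell^r}\lesssim\big(q^{-(d-1)/2}\big)^{1-\theta}q^{\theta}=q^{-\frac{d-1}{2}+\frac{d+1}{2}\theta},$$
whose exponent is $\le 0$ exactly when $\theta\le\tfrac{d-1}{d+1}$, i.e. when $r\ge\tfrac{2d+2}{d-1}$, and it vanishes at that endpoint. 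Therefore $|\langle g*G,g\rangle|\le\|g*G\|_{\ell^r}\|g\|_{\ell^{r'}}\lesssim\|g\|_{\ell^{r'}}^2$, and adding the three estimates gives $\|\widehat g\|_{L^2(d\sigma_j)}\lesssim\|g\|_{\ell^{r'}}$, which is the assertion. The one genuinely hard ingredient is the $\ell=d$ computation: expanding $\mathrm{Kl}_d$ in Gauss sums only yields $|\mathrm{Kl}_d(a)|\lesssim q^{d/2}$, i.e. $|(d\sigma_j)^\vee(m)|\lesssim q^{1-d/2}$ on the full-support vectors, which would only give $r\ge\tfrac{2d}{d-2}>\tfrac{2d+2}{d-1}$; so genuine square-root cancellation in the $d$-fold sum $\mathrm{Kl}_d$ (Deligne's estimate) is indispensable, and the remainder of the argument is bookkeeping.
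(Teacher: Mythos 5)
Your proof is correct and follows essentially the same route as the paper: the same duality and $RR^*$ expansion, the same exact evaluation of $(d\sigma_j)^\vee$ according to the number of nonzero coordinates, the same decomposition isolating the full-support piece, Deligne's Kloosterman bound for the $\ell^1\to\ell^\infty$ estimate, and Riesz--Thorin against the $\ell^2\to\ell^2$ bound $q$. The only differences are cosmetic bookkeeping: you lump the intermediate supports into a single kernel $B$ with $\|B\|_{\ell^1}=2^d-2$ and use Young, where the paper bounds each piece via $\|\widehat{(d\sigma_j)^\vee 1_{N_k}}\|_{\ell^\infty}\lesssim 1$, and you obtain $\|\widehat G\|_{\ell^\infty}\lesssim q$ by subtracting from $\widehat{(d\sigma_j)^\vee}=\frac{q^d}{|H_j|}1_{H_j}$ rather than by the paper's direct computation.
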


It is not hard to see that the Hamming variety $H_j$ with $j\ne 0$ does not contain any line. Taking $k=0$  
in Conjecture \ref{conj1} we may conjecture that 
$$ R^*_{H_j}(2\to r) \lesssim 1 \quad \mbox{if} \quad \frac{2d}{d-1}\le r\le \infty.$$
Theorem \ref{main} is much weaker than this conjecture, but for $d\ge 4$ it can not be obtained by simply applying Lemma \ref{ST}. To see this, notice from Corollary \ref{CorDecaysigma} in Section \ref{sec3} that 
 $$ \max_{m\in \mathbb F_q^d\setminus\{(0,\ldots, 0)\}}|(d\sigma_j)^\vee(m)|\sim   q^{-1}\quad \mbox{for} \quad d\ge 4. $$
Combining this with Lemma \ref{ST}, we only see that $R^*_{H_j}(2\to 4)\lesssim 1$ which is much weaker than Theorem \ref{main} for $d\ge 4.$ To prove Theorem \ref{main}, we decompose the surface measure on $H_j$ into $(d+1)$ surface measures and each of them will be analyzed.
\begin{remark} In even dimensions, much progress on extension problems for the paraboloid $P$ in $\mathbb F_q^d$ has been made by improving the additive energy estimate for subsets of the paraboloid (see, for example, \cite{Le13, IKL17, RS18}).  
Recall that for a set $E$ in $\mathbb F_q^d$, the additive energy of the set $E$, denoted by $\Lambda(E)$, is defined by 
$$ \Lambda(E)=\sum_{x,y,z,w\in E: x+y=z+w} 1.$$ 
When a set $E$ lies on the Hamming variety $H_j$, it seems that it is a challenging problem to obtain a good upper bound of $\Lambda(E).$
\end{remark}

\section{Discrete Fourier analysis} \label{sec2}
In this section, we review the discrete Fourier analysis which will be our main tool in proving our main result. The proofs of all statements in this section can be found in Been Green's lecture note \cite{G}.
In the finite field setting, the norm nesting properties hold: for $1\le p_1\le p_2\le \infty,$
\begin{equation} \label{nesting}\|g\|_{\ell^{p_2}(\mathbb F_q^d)} \le  \|g\|_{\ell^{p_1}(\mathbb F_q^d)}\end{equation}
and
$$  \|f\|_{L^{p_1}(\mathbb F_q)} \le  \|f\|_{L^{p_2}(\mathbb F_q^d)}, \quad\|f\|_{L^{p_1}(V, d\sigma)} \le  \|f\|_{L^{p_2}(V, d\sigma)}.$$
The Plancherel theorem states that
$$ \|\widehat{g}\|_{L^2(\mathbb F_q^d)}=\|g\|_{\ell^2(\mathbb F_q^d)} \quad \mbox{or} \quad
\|f^\vee\|_{\ell^2(\mathbb F_q^d)}=\|f\|_{L^2(\mathbb F_q^d)}$$
which can be easily deduced by the orthogonality of $\chi.$ We also note that $ \widehat{(f^\vee)}=f.$
Given functions $g_1, g_2: \mathbb F_q^d \to \mathbb C,$  the convolution function of $g_1$ and $g_2$, denoted by $g_1\ast g_2$, is defined by
$$ g_1\ast g_2 (m)=\sum_{n\in \mathbb F_q^d} g_1(m-n) g_2(n).$$
One can easily check that $\widehat{g_1\ast g_2}=\widehat{g_1} \widehat{g_2}.$ We recall that Young's inequality for convolutions states that  if $ 1\le a,b, r \le \infty$ satisfy $1/r=1/a+1/b-1,$ then
$$ \|g_1\ast g_2\|_{\ell^r(\mathbb F_q^d)} \le \|g_1\|_{\ell^a(\mathbb F_q^d)} \|g_2\|_{\ell^b(\mathbb F_q^d)} .$$
We will invoke the following well-known interpolation theorem.
\begin{theorem} [Riesz-Thorin]\label{tmR} Let $1\le p_0, p_1, r_0, r_1\le \infty$ with $p_0\le p_1$ and $r_0\le r_1.$
Suppose that $T$ is a linear operator and the following two estimates hold for all functions $g$ on $\mathbb F_q^d:$ 
$$ \|Tg\|_{\ell^{r_0}(\mathbb F_q^d)} \le M_0 \|g\|_{\ell^{p_0}(\mathbb F_q^d)}\quad \mbox{and} \quad
\|Tg\|_{\ell^{r_1}(\mathbb F_q^d)} \le M_1 \|g\|_{\ell^{p_1}(\mathbb F_q^d)}.$$
Then we have
$$\|Tg\|_{\ell^{r}(\mathbb F_q^d)} \le M_0^{1-\theta} M_1^{\theta} \|g\|_{\ell^{p}(\mathbb F_q^d)}$$
for any $0\le \theta \le 1$ with
$$ \frac{1-\theta}{r_0} + \frac{\theta}{r_1}=\frac{1}{r} \quad \mbox{and}\quad
\frac{1-\theta}{p_0} + \frac{\theta}{p_1}=\frac{1}{p}.$$
\end{theorem}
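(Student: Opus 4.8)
The plan is to prove this by the method of complex interpolation, i.e.\ via the Hadamard three-lines theorem. A pleasant feature of the finite field setting is that every $\ell^p(\mathbb F_q^d)$ is a finite-dimensional space, so all the analytic functions produced below are automatically entire and bounded on the strip, and no density or approximation argument is needed. First I would reduce the desired norm bound to a bilinear pairing estimate. Writing $\langle u, v\rangle := \sum_{m\in \mathbb F_q^d} u(m) v(m)$, the duality of $\ell^r$ and $\ell^{r'}$ on the finite set $\mathbb F_q^d$ gives
$$ \|Tg\|_{\ell^r(\mathbb F_q^d)} = \sup\left\{ |\langle Tg, h\rangle| : \|h\|_{\ell^{r'}(\mathbb F_q^d)}\le 1\right\}, $$
so it suffices to show $|\langle Tg, h\rangle| \le M_0^{1-\theta} M_1^\theta$ whenever $\|g\|_{\ell^p(\mathbb F_q^d)}=\|h\|_{\ell^{r'}(\mathbb F_q^d)}=1$.

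Next I would introduce the interpolating family. For $z$ in the strip $\{0\le \mathrm{Re}(z)\le 1\}$ define the exponents $p(z), r'(z)$ by
$$ \frac{1}{p(z)} = \frac{1-z}{p_0}+\frac{z}{p_1}, \qquad \frac{1}{r'(z)} = \frac{1-z}{r_0'}+\frac{z}{r_1'}, $$
so that $p(\theta)=p$ and $r'(\theta)=r'$. Writing $g = |g|e^{i\alpha}$ and $h = |h| e^{i\beta}$ with real phase functions $\alpha, \beta$ (and setting the corresponding term equal to $0$ where $g$ or $h$ vanishes), define
$$ g_z := |g|^{p/p(z)} e^{i\alpha}, \qquad h_z := |h|^{r'/r'(z)} e^{i\beta}, \qquad F(z):= \langle T g_z, h_z\rangle. $$
For each fixed $m$ the maps $z\mapsto g_z(m)$ and $z\mapsto h_z(m)$ are exponentials in $z$, hence entire; since $T$ is linear and $\mathbb F_q^d$ is finite, $F$ is a finite sum of products of entire functions, and as the moduli $|g_z(m)|, |h_z(m)|$ depend only on $\mathrm{Re}(z)$ it is bounded on the strip. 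At $z=\theta$ the exponents $p/p(\theta)$ and $r'/r'(\theta)$ equal $1$, so $g_\theta=g$, $h_\theta=h$, and $F(\theta)=\langle Tg, h\rangle$, the quantity we must bound.

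The core estimates are on the two boundary lines. On $\mathrm{Re}(z)=0$ one computes $\mathrm{Re}(1/p(it))=1/p_0$, whence $\|g_{it}\|_{\ell^{p_0}(\mathbb F_q^d)}^{p_0}=\sum_m |g(m)|^p = 1$, and similarly $\|h_{it}\|_{\ell^{r_0'}(\mathbb F_q^d)}=1$; by the first hypothesis together with H\"older's inequality,
$$ |F(it)| \le \|Tg_{it}\|_{\ell^{r_0}(\mathbb F_q^d)}\, \|h_{it}\|_{\ell^{r_0'}(\mathbb F_q^d)} \le M_0 \|g_{it}\|_{\ell^{p_0}(\mathbb F_q^d)}\|h_{it}\|_{\ell^{r_0'}(\mathbb F_q^d)} = M_0. $$
The identical computation on $\mathrm{Re}(z)=1$, using the second hypothesis, yields $|F(1+it)|\le M_1$. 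Applying the Hadamard three-lines theorem to the bounded analytic $F$ gives $|F(\theta)|\le M_0^{1-\theta}M_1^\theta$, and taking the supremum over admissible $h$ completes the proof.

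The step I expect to require the most care is the three-lines theorem together with the bookkeeping at endpoint exponents: when one of $p, p_0, p_1$ (or the corresponding $r$-exponents) is infinite the exponent $p/p(z)$ degenerates and the H\"older pairing must be read in the limiting sense, but in each such case the constraint $\frac{1-\theta}{p_0}+\frac{\theta}{p_1}=\frac1p$ forces the relevant exponents to coincide, so the argument simplifies rather than breaks. If one prefers not to invoke the three-lines theorem as a black box, it follows from the maximum modulus principle (Phragm\'en--Lindel\"of on the strip) applied to the auxiliary function $G(z):=F(z)\,M_0^{z-1}M_1^{-z}$, which satisfies $|G|\le 1$ on both boundary lines and hence throughout the strip.
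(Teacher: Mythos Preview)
Your argument is the standard complex-interpolation proof of Riesz--Thorin and is correct; in the finite setting the analyticity and boundedness of $F$ on the strip are automatic, exactly as you note, and the three-lines estimate then gives the desired bound. The paper does not actually prove Theorem~\ref{tmR}: it states the result and refers the reader to Green's lecture notes \cite{G} for all proofs in that section. So there is nothing to compare beyond observing that you have supplied the classical proof that the paper cites rather than reproduces. One small remark on your endpoint discussion: the claim that an infinite exponent ``forces the relevant exponents to coincide'' is not literally true in all configurations (e.g.\ $p=\infty$ with $0<\theta<1$ does force $p_0=p_1=\infty$, but $p_0=\infty$ with $p_1<\infty$ is also allowed), yet in every such case the function $z\mapsto g_z$ or $z\mapsto h_z$ either becomes constant or the exponent $p/p(z)$ remains a well-defined affine function of $z$, so the argument goes through unchanged.
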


\section{Fourier decay on Hamming varieties}\label{sec3}
Recall that $d\sigma_j$ denotes the normalized surface measure on the Hamming variety $H_j$ in $\mathbb F_q^d.$ 
In this section, we introduce an explicit form of $(d\sigma_j)^\vee$ which makes a crucial role in proving Theorem \ref{main}. 

\begin{lemma}\label{Decaysigma} For each $j\in \mathbb F_q^*$, let $d\sigma_j$ be the normalized surface measure on the Hamming variety $H_j$ in $\mathbb F_q^d.$ For each $m\in \mathbb F_q^d,$ denote by $\ell_m$ the number of zero components of $m.$ Then we have
\begin{equation}\label{CK1} (d\sigma_j)^\vee (m)= (-1)^{d-\ell_m} ~(q-1)^{-(d-\ell_m)} \quad\mbox{if}\quad 1\le \ell_m \le d.\end{equation}
In addition, if $\ell_m =0$, then $|(d\sigma_j)^\vee (m)|\lesssim q^{-\frac{(d-1)}{2}}.$
\end{lemma}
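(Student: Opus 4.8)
The plan is to evaluate the character sum $\sum_{x\in H_j}\chi(m\cdot x)$ directly from $(d\sigma_j)^\vee(m)=|H_j|^{-1}\sum_{x\in H_j}\chi(m\cdot x)$, using only three facts: since $j\ne 0$ every point of $H_j$ lies in $(\mathbb F_q^*)^d$, we have $|H_j|=(q-1)^{d-1}$, and $H_j$ is invariant under permuting coordinates. The two assertions of the lemma correspond to genuinely different regimes, and I would treat them separately.

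For the range $1\le\ell_m\le d$, I would fix an index $i$ with $m_i=0$. Because all coordinates of a point of $H_j$ are nonzero, setting $x_i:=j\prod_{k\ne i}x_k^{-1}$ identifies $H_j$ with $(\mathbb F_q^*)^{d-1}$ via $(x_k)_{k\ne i}\mapsto (x_k)_k$. Since $m_i=0$, the variable $x_i$ does not appear in $m\cdot x$, so the sum factors completely:
\[
\sum_{x\in H_j}\chi(m\cdot x)=\prod_{k\ne i}\left(\sum_{x_k\in\mathbb F_q^*}\chi(m_k x_k)\right),
\]
and by orthogonality of $\chi$ each factor equals $q-1$ if $m_k=0$ and $-1$ if $m_k\ne 0$. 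Exactly $\ell_m-1$ of the $d-1$ indices $k\ne i$ have $m_k=0$, so the product is $(-1)^{d-\ell_m}(q-1)^{\ell_m-1}$; dividing by $|H_j|=(q-1)^{d-1}$ yields \eqref{CK1}. This part is purely bookkeeping with elementary character sums.

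For the range $\ell_m=0$, all $m_k\ne 0$, and I would substitute $y_k:=m_k x_k$, a bijection of $\mathbb F_q^*$ onto itself for each $k$. Then $\prod_k x_k=j$ becomes $\prod_k y_k=j\,m_1\cdots m_d=:a\ne 0$, while $m\cdot x$ becomes $y_1+\cdots+y_d$, so
\[
\sum_{x\in H_j}\chi(m\cdot x)=\sum_{\substack{y\in(\mathbb F_q^*)^d\\ y_1\cdots y_d=a}}\chi(y_1+\cdots+y_d)=:\mathrm{Kl}_d(a),
\]
the $d$-dimensional hyper-Kloosterman sum. Since $a\ne 0$, Deligne's bound gives $|\mathrm{Kl}_d(a)|\le d\,q^{(d-1)/2}$ (for $d=2$ this is Weil's estimate for the classical Kloosterman sum), and hence $|(d\sigma_j)^\vee(m)|\le d\,q^{(d-1)/2}(q-1)^{-(d-1)}\lesssim q^{-(d-1)/2}$.

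The hard part is exactly this last estimate, and I do not expect to be able to avoid the Kloosterman-sum input. The natural elementary alternative is to detect $\{\prod_k x_k=j\}$ with multiplicative characters, which gives $\sum_{x\in H_j}\chi(m\cdot x)=(q-1)^{-1}\sum_\psi\overline{\psi}(a)\,g(\psi)^d$ with $g(\psi)$ the Gauss sum and $|g(\psi)|=\sqrt q$ for nontrivial $\psi$; but bounding this term by term only produces $|(d\sigma_j)^\vee(m)|\lesssim q^{-(d-2)/2}$, because it discards the additional square-root cancellation present in the sum over $\psi$. Recovering that final factor of $q^{1/2}$, i.e. passing from $q^{-(d-2)/2}$ to the sharp $q^{-(d-1)/2}$, is precisely what Deligne's theorem on hyper-Kloosterman sums supplies, so that is the single external ingredient the proof must rely on.
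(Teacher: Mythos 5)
Your proof is correct and follows essentially the same route as the paper: parametrize $H_j$ by $d-1$ free coordinates in $\mathbb F_q^*$, evaluate the factored character sums by orthogonality when $\ell_m\ge 1$, and invoke Deligne's bound on hyper-Kloosterman sums when $\ell_m=0$. Your one (nice) variation is to eliminate a coordinate $x_i$ with $m_i=0$, which makes the sum factor completely into one-dimensional sums and handles $\ell_m=d$ uniformly, whereas the paper eliminates $x_d$ with $m_d\ne 0$ and then kills the resulting reciprocal term by summing over a zero-coefficient variable first; both yield the same computation.
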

\begin{proof} Since $j\ne 0,$ we  see that $|H_j|=(q-1)^{d-1}\sim q^{d-1}$ and all components of any element in the Hamming variety $H_j$ are not zero.
By definition, it follows
$$(d\sigma_j)^\vee (m) =\frac{1}{|H_j|} \sum_{x\in H_j} \chi(m\cdot x)= \frac{1}{|H_j|} \sum_{x_1, x_2,\ldots, x_d\in \mathbb F_q^*: x_1x_2\cdots x_d=j} \chi(m\cdot x).$$
\noindent \textbf{Case 1.} Assume that $\ell_m=d.$
Then $m=(0,\ldots,0)$ and so $(d\sigma)^\vee (m)=1.$\\

\noindent \textbf{Case 2.} Assume that $\ell_m= k$ for some $k=1, 2, \ldots, (d-1).$  Without loss of generality, we may assume that $m_1=m_2=\cdots =m_k=0,$ and $ m_i\ne 0$ for $ k+1\le i \le d.$ It follows that
\begin{align*} (d\sigma_j)^\vee (m) =& \frac{1}{|H_j|} \sum_{x_1, x_2,\ldots, x_{d-1} \in \mathbb F_q^*}  \chi(m_1x_1+ m_2x_2+ \cdots + m_{d-1}x_{d-1})~ \chi\left(\frac{jm_d}{x_1x_2 \cdots x_{d-1}}\right)\\
=& \frac{1}{|H_j|} \sum_{x_1, x_2,\ldots, x_{d-1} \in \mathbb F_q^*} \chi(m_{k+1}x_{k+1} + \cdots + m_{d-1} x_{d-1}) ~ \chi\left(\frac{jm_d}{x_1x_2 \cdots x_{d-1}}\right),
\end{align*}
where we assume that if $k=d-1,$ then $\chi(m_{k+1}x_{k+1} + \cdots + m_{d-1} x_{d-1})=1.$
Since $j, m_d \ne 0$, we see from the orthogonality of $\chi$ that for each $x_2, \cdots, x_{d-1} \in \mathbb F_q^*,$
$$ \sum_{x_1\in\mathbb F_q^*} \chi\left(\frac{jm_d}{x_1x_2 \cdots x_{d-1}}\right) =-1.$$
Therefore we have
$$ (d\sigma_j)^\vee (m) = - \frac{1}{|H_j|} \sum_{x_{k+1}, \ldots, x_{d-1}\in \mathbb F_q^*} \chi(m_{k+1}x_{k+1} + \cdots + m_{d-1} x_{d-1}) ~\left(\sum_{x_2, \ldots, x_{k} \in \mathbb F_q^*} 1 \right).$$
Since $|H_j|=(q-1)^{d-1}$ and $ m_{k+1}, \ldots, m_{d-1} \ne 0$,  we conclude from the orthogonality of $\chi$ that
$$ (d\sigma_j)^\vee (m)= (-1)^{d-k} ~(q-1)^{-(d-k)} \quad\mbox{if}\quad 1\le \ell_m =k \le d-1,$$
which completes the proof in the case when $ 1\le \ell_m \le (d-1).$\\

\noindent \textbf{Case 3.} Assume that $\ell_m =0.$ Then all components of $m$ are not zero.
As in Case 2, we can write
$$(d\sigma_j)^\vee (m) =\frac{1}{|H_j|} \sum_{x_1, x_2,\ldots, x_{d-1} \in \mathbb F_q^*}  \chi(m_1x_1+ m_2x_2+ \cdots + m_{d-1}x_{d-1})~ \chi\left(\frac{jm_d}{x_1x_2 \cdots x_{d-1}}\right).$$
Since $|H_j|\sim q^{d-1},$ the last part of the theorem is a direct consequence from the following theorem due to  Deligne \cite{De}:

\begin{theorem} [Multiple Kloosterman sums] For $a_1, a_2, \ldots, a_s, b\in \mathbb F_q^*$, we have
$$\left| \sum_{x_1,x_2, \ldots, x_s\in \mathbb F_q^*} \chi(a_1x_1 +\cdots+ a_s x_s + b x_1^{-1} x_2^{-1} \cdots x_s^{-1}) \right| \le (s+1) q^{\frac{s}{2}}.$$\end{theorem}
To find further references for Multiple Kloosterman sums, we refer readers to [P.254, \cite{LN97}].
\end{proof}

\bigskip
The following result follows immediately from Lemma \ref{Decaysigma}.
\begin{corollary} \label{CorDecaysigma} For each $j\in \mathbb F_q^*$, let $d\sigma_j$ denote the normalized surface measure on $H_j$ in $\mathbb F_q^d.$
Then we have
$$ \max_{m\in \mathbb F_q^d\setminus\{(0,\ldots, 0)\}}|(d\sigma)^\vee(m)|\sim   q^{-1}\quad \mbox{for} \quad d\ge 3. $$
\end{corollary}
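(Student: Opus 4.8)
The plan is to read the two regimes of Lemma \ref{Decaysigma} against each other and extract both the upper and the lower bound. First I would observe that every $m\in\mathbb F_q^d\setminus\{(0,\ldots,0)\}$ satisfies $0\le \ell_m\le d-1$, since $\ell_m=d$ would force $m=(0,\ldots,0)$. This splits the maximization into the two cases already handled in Lemma \ref{Decaysigma}.

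For the upper bound, if $1\le \ell_m\le d-1$ then \eqref{CK1} gives $|(d\sigma_j)^\vee(m)|=(q-1)^{-(d-\ell_m)}$, which is a nonincreasing function of $d-\ell_m\ge 1$; hence it is at most $(q-1)^{-1}\sim q^{-1}$, with equality precisely when $\ell_m=d-1$. If instead $\ell_m=0$, the second assertion of Lemma \ref{Decaysigma} gives $|(d\sigma_j)^\vee(m)|\lesssim q^{-(d-1)/2}$, and for $d\ge 3$ we have $(d-1)/2\ge 1$, so this is $\lesssim q^{-1}$ as well. Combining the two cases yields $\max_{m\neq (0,\ldots,0)}|(d\sigma_j)^\vee(m)|\lesssim q^{-1}$.

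For the matching lower bound I would simply exhibit a vector attaining it: take $m=(1,0,\ldots,0)$, so that $\ell_m=d-1$, and \eqref{CK1} gives $|(d\sigma_j)^\vee(m)|=(q-1)^{-1}\gtrsim q^{-1}$. Therefore $\max_{m\neq (0,\ldots,0)}|(d\sigma_j)^\vee(m)|\sim q^{-1}$, which is the claim.

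There is essentially no genuine obstacle in this argument; it is pure bookkeeping on top of Lemma \ref{Decaysigma}. The only point requiring a moment's care is the dimension hypothesis $d\ge 3$: it is exactly the condition that makes the oscillatory contribution $q^{-(d-1)/2}$ from the $\ell_m=0$ frequencies no larger than the ``flat'' contribution $q^{-1}$ coming from frequencies $m$ with a single nonzero coordinate, so that the latter governs the maximum.
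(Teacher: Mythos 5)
Your proposal is correct and is exactly the intended argument: the paper offers no proof beyond asserting that the corollary ``follows immediately from Lemma \ref{Decaysigma},'' and your case split (the bound $(q-1)^{-(d-\ell_m)}\le (q-1)^{-1}$ for $1\le \ell_m\le d-1$, the Deligne bound $q^{-(d-1)/2}\lesssim q^{-1}$ for $\ell_m=0$ when $d\ge 3$, and the witness $m$ with $\ell_m=d-1$ for the lower bound) is precisely the omitted bookkeeping.
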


\section{Proof of Theorem \ref{main}}

We aim to prove that the extension estimate
$$ \|(fd\sigma_j)^\vee\|_{\ell^{\frac{2d+2}{d-1}}(\mathbb F_q^d)} \lesssim \|f\|_{L^2(H_j, d\sigma_j)}$$
holds for all complex-valued functions $f$ on $H_j.$
By duality, it suffices to prove that the restriction estimate
$$ \|\widehat{g}\|_{L^2(H_j, d\sigma_j)} \lesssim \|g\|_{\ell^{\frac{2d+2}{d+3}}(\mathbb F_q^d)}$$
holds for all complex-valued functions $g$ on $\mathbb F_q^d.$
By the $RR^*$ method (see \cite{G}), we see that
$$ \|\widehat{g}\|^2_{L^2(H_j, d\sigma_j)} = \left<g,~ g\ast (d\sigma_j)^\vee\right>_{\ell^2(\mathbb F_q^d)}.$$
Here, we recall that if $g, h: \mathbb F_q^d\to \mathbb C,$ then 
$$ \left<g,~ h \right>_{\ell^2(\mathbb F_q^d)} := \sum_{m\in \mathbb F_q^d} g(m) \overline{h} (m).$$
For each $k=0,1,\ldots, d,$  define
$$ N_k=\{m\in \mathbb F_q^d: k~\mbox{components of}~m~\mbox{are exactly zero}\}.$$
We decompose $(d\sigma_j)^\vee$ as
$$ (d\sigma_j)^\vee(m)= ((d\sigma_j)^\vee 1_{N_0})(m) + \sum_{k=1}^d ((d\sigma_j)^\vee 1_{N_k})(m).$$
It follows that
\begin{align*} \|\widehat{g}\|^2_{L^2(H_j, d\sigma_j)} =& \left<g,~ g\ast ((d\sigma_j)^\vee 1_{N_0})\right>_{\ell^2(\mathbb F_q^d)} + \left<g,~ g\ast \sum_{k=1}^d((d\sigma_j)^\vee 1_{N_k})\right>_{\ell^2(\mathbb F_q^d)}\\
=&\left<g,~ g\ast ((d\sigma_j)^\vee 1_{N_0})\right>_{\ell^2(\mathbb F_q^d)} + \sum_{k=1}^d\left<g,~ g\ast ((d\sigma_j)^\vee 1_{N_k})\right>_{\ell^2(\mathbb F_q^d)}.\end{align*}
Hence, to complete the proof, it will be enough to show that the following two inequalities hold for all functions $g: \mathbb F_q^d\to \mathbb C$ and for all $k=1,2,\ldots, d:$
\begin{equation}\label{pt1}
\left<g,~ g\ast ((d\sigma_j)^\vee 1_{N_0})\right>_{\ell^2(\mathbb F_q^d)} \lesssim \|g\|^2_{\ell^{\frac{2d+2}{d+3}}(\mathbb F_q^d)}\end{equation}
and
\begin{equation}\label{pt2}\left<g,~ g\ast ((d\sigma_j)^\vee 1_{N_k})\right>_{\ell^2(\mathbb F_q^d)} \lesssim 
\|g\|^2_{\ell^{\frac{2d+2}{d+3}}(\mathbb F_q^d)}.
\end{equation}
In the following subsections, we will give the proofs of inequalities \eqref{pt1} and \eqref{pt2}, which completes  the proof of Theorem \ref{main}.
\subsection{Proof of inequality \eqref{pt1}}
By H\"{o}lder's inequality, we have
\[\left<g,~ g\ast ((d\sigma_j)^\vee 1_{N_0})\right>_{\ell^2(\mathbb F_q^d)} \lesssim \|g\|_{\ell^{\frac{2d+2}{d+3}}(\mathbb F_q^d)}  \| g\ast ((d\sigma_j)^\vee 1_{N_0})\|_{\ell^{\frac{2d+2}{d-1}}(\mathbb{F}_q^d)}.\]
Thus, in order to prove the inequality (\ref{pt1}), it is enough to prove the following:
\[\|g\ast ((d\sigma_j)^\vee 1_{N_0})\|_{\ell^{\frac{2d+2}{d-1}}(\mathbb{F}_q^d)}\lesssim \|g\|_{\ell^{\frac{2d+2}{d+3}}(\mathbb{F}_q^d)}.\]
By the Riesz-Thorin interpolation theorem (Theorem \ref{tmR}), it suffices to prove the following two inequalities:
\begin{equation}\label{eq1}
\|g\ast ((d\sigma_j)^\vee 1_{N_0})\|_{\ell^{2}(\mathbb{F}_q^d)} \lesssim q\|g\|_{\ell^{2}(\mathbb F_q^d)},
\end{equation}
and
\begin{equation}\label{eq2}
\|g\ast ((d\sigma_j)^\vee 1_{N_0})\|_{\ell^{\infty}(\mathbb{F}_q^d)} \lesssim q^{-\frac{d-1}{2}}\|g\|_{\ell^{1}(\mathbb F_q^d)}.
\end{equation}
For the first inequality \eqref{eq1}, using the Plancherel theorem gives us the following estimate: 
\begin{align}\label{EE}\|g\ast ((d\sigma_j)^\vee 1_{N_0})\|_{\ell^{2}(\mathbb{F}_q^d)}&=\|\widehat{g} ~\widehat{ ((d\sigma_j)^\vee 1_{N_0}})\|_{L^2(\mathbb F_q^d)}\\
&\le \left[\max_{x\in \mathbb F_q^d} \left|\widehat{ ((d\sigma_j)^\vee 1_{N_0}}) (x)\right|\right] \|g\|_{\ell^2(\mathbb F_q^d)}. \nonumber \end{align}
Now, for each fixed $x\in\mathbb F_q^d,$ we have
$$ \left|\widehat{ ((d\sigma_j)^\vee 1_{N_0}}) (x)\right|=\left|\sum_{m\in N_0} \chi(-m\cdot x) \frac{1}{|H_j|} \sum_{y\in H_j} \chi(m\cdot y)\right|= \frac{1}{|H_j|} \left|\sum_{y\in H_j} \sum_{m\in N_0} \chi(m\cdot (y-x))\right|.$$
Using a change of variable by letting $z=y-x,$ and the triangle inequality, 
 \begin{equation}\label{sam}\left|\widehat{ ((d\sigma_j)^\vee 1_{N_0}}) (x)\right|\le  \frac{1}{|H_j|} \sum_{z\in H_j-x} \left|\sum_{m\in N_0} \chi(m\cdot z)\right|\lesssim \frac{1}{q^{d-1}} \sum_{z\in \mathbb F_q^d}\left|\sum_{m\in N_0} \chi(m\cdot z)\right|.\end{equation}
We decompose $\sum_{z\in \mathbb F_q^d}$ into $\sum_{k=0}^d \sum_{z\in N_k}$, and use the orthogonality of $\chi.$ Then we obtain
\begin{align*}\left|\widehat{ ((d\sigma_j)^\vee 1_{N_0}}) (x)\right|&\lesssim  \frac{1}{q^{d-1}}\sum_{k=0}^d\sum_{z\in N_k}\left\vert \sum_{m_1, \ldots, m_d\in \mathbb{F}_q^*}\chi(m\cdot z)\right\vert\\
&= \frac{1}{q^{d-1}}\sum_{k=0}^d (q-1)^k|N_k|\lesssim q,\end{align*}
where we also used the fact that $|N_k|={d\choose k} q^{d-k}\sim q^{d-k}.$ Thus the inequality \eqref{eq1} holds.

The second inequality \eqref{eq2} follows by using Young's inequality for convolutions and  the second part of Lemma \ref{Decaysigma}. More precisely, we have
\[\|g\ast ((d\sigma_j)^\vee 1_{N_0})\|_{\ell^{\infty}(\mathbb{F}_q^d)} \le \|((d\sigma_j)^\vee 1_{N_0}) \|_{\ell^{\infty}(\mathbb{F}_q^d)}\|g\|_{\ell^{1}(\mathbb F_q^d)} \lesssim q^{-\frac{d-1}{2}}\|g\|_{\ell^{1}(\mathbb F_q^d)}.\]
Hence, the proof of the inequality \eqref{pt1} is complete. 

\subsection{Proof of inequality \eqref{pt2}} We will prove  much better inequality than the inequality \eqref{pt2}. Notice from the norm nesting property \eqref{nesting} that 
$$ \|g\|_{\ell^{2}(\mathbb F_q^d)}\le \|g\|_{\ell^{\frac{2d+2}{d+3}}(\mathbb F_q^d)}.$$
To complete the proof of the inequality \eqref{pt2}, it will be enough to show that for each $k=1,2,\ldots,d,$
$$\left<g,~ g\ast ((d\sigma_j)^\vee 1_{N_k})\right>_{\ell^2(\mathbb F_q^d)} \lesssim 
\|g\|^2_{\ell^{2}(\mathbb F_q^d)}.$$
By H\"{o}lder's inequality, we have
$$\left<g,~ g\ast ((d\sigma_j)^\vee 1_{N_k})\right>_{\ell^2(\mathbb F_q^d)} \le \|g\|_{\ell^{2}(\mathbb F_q^d )}~\|g\ast ((d\sigma_j)^\vee 1_{N_k})\|_{\ell^{2}(\mathbb{F}_q^d)}.$$

As seen in \eqref{EE}, it suffices by  the Plancherel theorem  to prove that
\begin{equation}\label{goa}\left[\max_{x\in \mathbb F_q^d} \left|\widehat{ ((d\sigma_j)^\vee 1_{N_k}}) (x)\right|\right] \lesssim 1.\end{equation}

Fix $x\in \mathbb F_q^d$ and $k \in \{1,2, \ldots, d\}.$ From \eqref{CK1} of Lemma \ref{Decaysigma}, we see that for each $m\in \mathbb F_q^d,$
$$ ((d\sigma_j)^\vee 1_{N_k}) (m)= (-1)^{d-k}(q-1)^{-d+k} 1_{N_k}(m).$$
Therefore, we have
$$ \left|\widehat{ ((d\sigma_j)^\vee 1_{N_k}}) (x)\right|=(q-1)^{-d+k} \left|\widehat{N_k}(x)\right|.$$
Since $|\widehat{N_k}(x)|=|\sum_{m\in N_k} \chi(-m\cdot x)| \le |N_k|= {d\choose{k}} q^{d-k}\sim q^{d-k},$ we conclude that
$$\left|\widehat{ ((d\sigma_j)^\vee 1_{N_k}}) (x)\right|\lesssim 1.$$
Thus, the inequality \eqref{goa} holds, which  completes the proof of the inequality \eqref{pt2}.

\bibliographystyle{amsplain}

\begin{thebibliography}{10}

\bibitem{CC} C. Chen, \emph{Finite field analogue of restriction theorem for general measures},  preprint (2018), arXiv:1801.00109.

\bibitem{De} P. Deligne, \emph{Applications de la formule des traces aux sommes trigonom\'{e}triquescc, Cohomologie Etale(S\'eminaire de G\'eom\'etrie Alg\'ebrique du Bois-Marie SGA $4\frac{1}{2}$),} Lecture Notes in Math., vol. 569, PP. 168-232, Springer-Verlag, Berlin-Heidelberg-New York, 1977.

\bibitem{G} B. Green, \emph{Restriction and Kakeya phenomena}, lecture note, 
http://people.maths.ox.ac.uk/greenbj/papers/rkp.pdf.

\bibitem{IK09} A. Iosevich, D. Koh, \emph{Extension theorems for paraboloids in the finite field setting}, Math. Z. {\bf 266} (2010),  471--487.

\bibitem{IK10} A. Iosevich and D. Koh, \emph{Extension theorems for spheres in the finite field setting}, Forum. Math. {\bf 22} (2010), no.3, 457-483.

\bibitem{IKL17} A. Iosevich, D. Koh, and M. Lewko, \emph{Finite field restriction estimates for the paraboloid in high even dimensions,} preprint (2017), arXiv:1712.05549.

\bibitem{IKLST} A. Iosevich, D. Koh, L. Sujin, C. Shen, and T. Pham, \emph{On restriction estimates for spheres in finite fields,} preprint (2018),  arXiv:1806.11387.


\bibitem{Ko16} D. Koh, \emph{Conjecture and improved extension theorems for paraboloids in the finite field setting}, preprint (2016), arXiv:1603.06512.

\bibitem{KPV} D. Koh, T. Pham, and L. A, Vinh, \emph{Extension theorems and Distance problems over finite fields}, preprint (2018),  arXiv:1809.08699.


\bibitem{KS12} D. Koh and C. Shen, \emph{Sharp extension theorems and Falconer distance problems for algebraic curves in two dimensional vector spaces over finite fields}, Rev. Mat. Iberoam.  \textbf{28}  (2012),  no.1, 157-178.
 

\bibitem{KS} D. Koh and C. Shen, \emph{Extension and averaging operators for finite fields}, Proceedings of the Edinburgh Mathematical Society \textbf{56}, (2013), no. 2, 599-614. 

\bibitem{Le13}  M.~Lewko, \emph{New restriction estimates for the 3-d paraboloid over finite fields},   Adv. Math. {\bf 270} (2015), no.1, 457-479.

\bibitem{Le19}  M.~Lewko, \emph{Counting rectangles and an improved restriction estimate for the paraboloid in}, preprint (2018), arXiv:1901.10085. 
 

\bibitem{LL10} A.~ Lewko and M.~Lewko, \emph{Endpoint restriction estimates for the paraboloid over finite fields}, Proc. Amer. Math. Soc. 140 (2012), 2013-2028.


\bibitem{LN97} R. Lidl and H. Niederreiter, \emph{ Finite fields,} Cambridge University Press, (1997).



\bibitem{MT04} G. Mockenhaupt, and T. Tao, \emph{Restriction and Kakeya phenomena for finite fields}, Duke Math. J. \textbf{121} (2004), 1, 35-74.

\bibitem{RS18} M. Rudnev and I. Shkredov, \emph{On the restriction problem for discrete paraboloid in lower dimension}, Adv. Math. {\bf 339} (2018), 657-671.
 


\end{thebibliography}

\end{document}